\newtheorem{thm}{Theorem}[section]
\newtheorem{prop}[thm]{Proposition}
\newtheorem{lemma}[thm]{Lemma}
\newtheorem{cor}[thm]{Corollary}
\theoremstyle{definition}
\newtheorem{definition}[thm]{Definition}
\newtheorem{example}[thm]{Example}
\theoremstyle{remark}
\newtheorem{remark}[thm]{Remark}
\numberwithin{equation}{section}
\newcommand{\zg}{\gamma}
\newcommand{\zG}{\Gamma}
\newcommand{\zs}{\sigma}
\newcommand{\zS}{\Sigma}
\newcommand{\calc}{\mathcal{C}}
\newcommand{\cale}{\mathcal{E}}
\newcommand{\calj}{\mathcal{J}}
\newcommand{\Hom}{\textup{Hom}}
\newcommand{\Ext}{\textup{Ext}}
\newcommand{\Ann}{\textup{Ann}}
\newcommand{\E}{\Ext^2_C(DC,C)}
\newcommand{\Ctilde}{\widetilde{C}}
\begin{document}
\title[Modules that do not lie on local slices]{Modules over cluster-tilted algebras that do not lie on local slices}
\author{Ibrahim Assem}
\address{D\'epartement de Math\'ematiques,
Universit\'e de Sherbrooke,
Sherbrooke, Qu\'ebec,
Canada J1K 2R1}
\email{ibrahim.assem@usherbrooke.ca}

\author{Ralf Schiffler}
\address{Department of Mathematics, University of Connecticut, 
Storrs, CT 06269-3009, USA}
\email{schiffler@math.uconn.edu}

\author{Khrystyna Serhiyenko}\thanks{The first author gratefully acknowledges partial support from the NSERC of Canada. The second author was supported by the NSF CAREER grant DMS-1254567. The third author was  supported by the NSF Postdoctoral fellowship MSPRF-1502881.}
\address{Department of Mathematics, University of California, Berkeley, 
CA 94720-3840, USA}
\email{khrystyna.serhiyenko@berkeley.edu}

\begin{abstract} We characterize the indecomposable transjective modules over an arbitrary cluster-tilted algebra that do not lie on a local slice, and we provide a sharp upper bound for the number of (isoclasses of)  these modules.
\end{abstract}

\maketitle
\section{Introduction} 
Cluster-tilted algebras were introduced by Buan, Marsh and Reiten \cite{BMR} and, independently in \cite{CCS} for type $\mathbb{A}$.
In \cite{ABS} is given a  procedure for constructing cluster-tilted algebras: let $C$ be a triangular algebra of global dimension two over an algebraically closed field $k$, and consider the $C$-$C$-bimodule $\E$, where $D=\Hom_k(-,k)$ is the standard duality, with its natural left and right $C$-actions.
The trivial extension of $C$ by this bimodule is called the {\em relation-extension} of $C$. It is shown there that, if $C$ is tilted, then its relation-extension is cluster-tilted, and every cluster-tilted algebra occurs in this way.

This relation between tilted and cluster-tilted algebras has been studied further in \cite{ABS2}. Inspired by the complete slices in the module categories of tilted algebras, the authors introduced the concept of {\em local slices} as a generalization of complete slices, by relaxing a convexity condition. In \cite{ABS2} it is shown that every cluster-tilted algebra $B$ admits a local slice $\zS$ and that, for every such local slice $\zS$, the quotient algebra $B/\Ann\,\zS$ of $B$ by the annihilator of $\zS$ is a tilted algebra with complete slice $\zS$.
Furthermore, there is a unique component in the Auslander-Reiten quiver of $B$, called the {\em transjective component}, that contains all local slices. Indecomposable modules in this transjective component are called {\em transjective}.

In the module category of a tilted algebra, a complete slice should be thought of as a rather special configuration reproducing the quiver of a hereditary algebra to which our algebra tilts. It is well-known that an algebra is tilted if and only if it admits a complete slice, see, for instance \cite{R}.
In contrast to the above situation, the existence of a local slice does {\em not}  characterize cluster-tilted algebras.
In \cite{ABS2}, it is shown that if the cluster-tilted algebra is of tree type, then \emph{every} indecomposable transjective module  lies on a local slice. On the other hand, the authors also gave an example of an indecomposable transjective module over a cluster-tilted algebra of type $\widetilde{\mathbb{A}}_{2,1}$ that does {\em not} lie on a local slice.
 
However, the questions `which indecomposable transjective modules do not lie on local slices", and ``how many of these modules do exist", remained open.

It is the purpose of the current paper to answer both questions for arbitrary cluster-tilted algebras. First, we characterize the indecomposable transjective modules that do not lie on a local slice  in Theorem \ref{thm ls}, using the completion of strong sinks defined in \cite{AsScSe}. 
Then we prove that the number of isoclasses (= isomorphism classes) of indecomposable transjective modules not lying on local slices is finite, and we actually give a sharp bound for this number in Corollary~\ref{cor 2}.

\section{Preliminaries}

\subsection{Notation}
Throughout this paper, algebras are basic and connected finite dimensional algebras over a fixed algebraically closed field $k$.  For an algebra $B$, we denote by $\text{mod}\,B$ the category of finitely generated right $B$-modules.  All subcategories are full, and identified with their object classes.  Given a category $\mathcal{C}$, we sometimes write $M\in\mathcal{C}$ to express that $M$ is an object in $\mathcal{C}$.  

For a point $x$ in the ordinary quiver of a given algebra $B$, we denote by $P(x)$, $I(x)$, $S(x)$ respectively, the indecomposable projective, injective and simple $B$-modules corresponding to $x$.  We denote by $\Gamma(\text{mod}\,B)$ the Auslander-Reiten quiver of $B$ and by $\tau, \tau^{-1} $ the Auslander-Reiten translations.  For further definitions and facts, we refer the reader to \cite{ASS}.

\subsection{Tilting}
Let $Q$ be a finite connected and acyclic quiver.  A module $T$ over the path algebra $kQ$ of $Q$ is called \emph{tilting} if $\text{Ext}^1_{kQ}(T,T)=0$ and the number of isoclasses  of indecomposable summands of $T$ equals $|Q_0|$, see \cite{ASS}.  An algebra $C$ is called \emph{tilted of type $Q$} if there exists a tilting $kQ$-module $T$ such that $C=\text{End}_{kQ} T$.  An algebra $C$ is tilted if and only if it contains a \emph{complete slice} $\Sigma$, see \cite{R}, that is, a finite set of indecomposable modules such that 
\begin{itemize}
\item[1)] $\bigoplus_{U\in \Sigma} U$ is a sincere $C$-module.  
\item[2)] If $U_0\!\to\! U_1\! \to\! \dots \!\to\! U_t$ is a sequence of nonzero morphisms between indecomposable modules with $U_0,U_t\in\Sigma$ then $U_i\in\Sigma$ for all $i$ (\emph{convexity}).
\item[3)]  If $M$ is an indecomposable non-projective $C$-module then at most one of $M$, $\tau M$ belongs to $\Sigma$.
\item[4)]  If  $M,S$ are indecomposable $C$-modules, $f\colon M\rightarrow S$ an irreducible morphism and $S\in\Sigma$, then either $M\in\Sigma$ or $M$ is non-injective and $\tau^{-1} M\in \Sigma$. 
\end{itemize}


\subsection{Cluster-tilted algebras} 
Let $Q$ be a finite, connected and acyclic quiver.  The \emph{cluster category} $\mathcal{C}_Q$ of $Q$ is defined as follows, see \cite{BMRRT}.  
Let $F$ denote the composition $\tau^{-1}_{\mathcal{D}}[1]$, where $\tau^{-1}_{\mathcal{D}}$ denotes the inverse Auslander-Reiten translation in the bounded derived category $\mathcal{D} = \mathcal{D}^b(\text{mod}\, kQ)$, and [1] denotes the shift of $\mathcal{D}$.  Then $\mathcal{C}_Q$ is the orbit category $\mathcal{D}/F$: its objects are the $F$-orbits $\widetilde{X}=(F^i X)_{i\in\mathbb{Z}}$ of the objects $X\in\mathcal{D}$, and the space of morphisms from $\widetilde{X}=(F^i X)_{i\in\mathbb{Z}}$ to $\widetilde{Y}=(F^i Y)_{i\in\mathbb{Z}}$ is 
$\text{Hom}_{\mathcal{C}_Q}(\widetilde{X}, \widetilde{Y}) = \bigoplus_{i\in\mathbb{Z}} \text{Hom}_{\mathcal{D}}(X, F^i Y).$
Then $\mathcal{C}_Q$ is a triangulated category with almost split triangles and, moreover, for $\widetilde{X}, \widetilde{Y}\in\mathcal{C}_Q$ we have a bifunctorial isomorphism $\text{Ext}^1_{\mathcal{C}_Q}(\widetilde{X}, \widetilde{Y})\cong D\text{Ext}^1_{\mathcal{C}_Q}(\widetilde{Y},\widetilde{X})$.  This is expressed by saying that the category $\mathcal{C}_Q$ is \emph{2-Calabi-Yau}.

An object $\widetilde{T}\in\mathcal{C}_Q$ is called \emph{tilting} if $\text{Ext}^1_{\mathcal{C}_Q}(\widetilde{T}, \widetilde{T})=0$ and the number of isoclasses of indecomposable summands of $\widetilde{T}$ equals $|Q_0|$.  The endomorphism algebra $B=\text{End}_{\mathcal{C}_Q} \widetilde{T}$ is then called \emph{cluster-tilted} of type $Q$.  

Let now $T$ be a tilting $kQ$-module, and $C=\text{End}_{kQ} T$ the corresponding tilted algebra.  Then it is shown in \cite{ABS} that the trivial extension $\widetilde{C}$ of $C$ by the $C$-$C$-bimodule $\text{Ext}^2_C (DC,C)$ with the two natural actions of $C$, the so-called \emph{relation-extension} of $C$, is cluster-tilted.  Conversely, if $B$ is cluster-tilted, then there exists a tilted algebra $C$ such that $B=\widetilde{C}$.  

\subsection{Local slices}
Let  $B$ be a cluster-tilted algebra, then a full connected subquiver $\Sigma$ of $\Gamma(\text{mod}\,B)$ is a \emph{local slice}, see \cite{ABS2}, if: 
\begin{itemize}
\item[1)] $\Sigma$ is a \emph{presection}, that is, if $X\to Y$ is an arrow then: 
\begin{itemize}
\item[(a)] $X\in\Sigma$ implies that either $Y\in\Sigma$ or $\tau Y \in \Sigma$
\item[(b)] $Y\in\Sigma$ implies that either $X\in \Sigma$ or $\tau^{-1} X\in\Sigma$.
\end{itemize}
\item[2)] $\Sigma$ is \emph{sectionally convex}, that is, if $X=X_0\to X \to \dots \to X_t = Y$ is a sectional path in $\Gamma(\text{mod}\,B)$ then $X,Y\in\Sigma$ imply that $X_i\in\Sigma$ for all $i$.  
\item[3)] $|\Sigma_0| = \text{rk}\,K_0(B)$.  
\end{itemize}

Let $C$ be tilted, then, under the standard embedding $\text{mod}\,C \to \text{mod}\,\widetilde{C}$, any complete slice in the tilted algebra $C$ embeds as a local slice in $\text{mod}\,\widetilde{C}$, and any local slice in $\textup{mod}\,\Ctilde$ occurs in this way.  If $B$ is a cluster-tilted algebra, then a tilted algebra $C$ is such that $B=\widetilde{C}$ if and only if there exists a local slice $\Sigma$ in $\Gamma(\text{mod}\,B)$ such that $C=B/\text{Ann}_B \Sigma$, where $\text{Ann}_B \Sigma = \bigcap_{X\in\Sigma} \text{Ann}_B X$, see \cite{ABS2}.

\subsection{Completions and reflections}\label{sect 25} We recall the definition of reflections from \cite{AsScSe}. Let $B$ be a cluster-tilted algebra.
Let $\Sigma$ be a local slice in the transjective component of $\Gamma(\text{mod}\,B)$ having the property that all the sources in $\Sigma$ are injective $B$-modules.  Then $\Sigma$ is called a \emph{rightmost} slice of $B$.  Let $x$ be a point in the quiver of $B$ such that $I(x)$ is an injective source of the rightmost slice $\Sigma$.  

 The \emph{completion $H_x$ of $x$} is defined by the following three conditions.
\begin{itemize}
\item [(a)] $I(x)\in H_x$.
\item [(b)] $H_x$ is closed under predecessors in $\zS$.
\item [(c)] If $L\to M$ is an arrow in $\zS$ with $L\in H_x$ having an injective successor in $H_x$ then $M\in H_x$.
\end{itemize}
%
The completion $H_x$ can be constructed inductively in the following way. We let $H_1=I(x)$, and $H_2'$ be the closure of $H_1$ with respect to (c). We then let $H_2$ be the closure of $H_2'$ with respect to predecessors in $\zS$. Then we repeat the procedure; given $H_i$, we let $H_{i+1}'$ be the closure of $H_i$ with respect to (c) and $H_{i+1}$ be the closure of $H_{i+1}'$ with respect to predecessors. This procedure must stabilize, because the slice $\zS$ is finite. If $H_j=H_k$ with $k>j$
, we let $H_x=H_j$.

We can decompose $H_x$ as the disjoint union of three sets as follows. Let $\calj$ denote the set of injectives in $H_x$, let $\calj^-$ be the set of non-injectives in $H_x$ which have an injective successor in $H_x$, and let $\cale=H_x\setminus(\calj\cup\calj^-)$ denote the complement of $(\calj\cup\calj^-)$ in $H_x$. Thus
$H_x=\calj\sqcup\calj^-\sqcup\cale$
is a disjoint union.
The {\em reflection of the slice $\zS$ in $x$} is defined as
 \[\zs_x^+\zS=\tau^{-2}(\calj\cup\calj^-)\cup\tau^{-1}\cale\cup(\zS\setminus H_x),\]
 where $\tau^{-2}\calj$ stands for the set of all indecomposable projectives $P(y)$ such that the corresponding injective $I(y)$ is in the set $\mathcal{J}$.

\begin{thm}\cite[Theorem 4.4]{AsScSe}\label{thm reflection}
 Let $\zS$ be a rightmost local slice in $\textup{mod}\,B$  with injective source $I(x)$. Then the reflection $\zs_x^+\zS$ is a local slice as well.
\end{thm}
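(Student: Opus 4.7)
The plan is to verify that $\zs_x^+\zS$ satisfies the three defining conditions of a local slice, namely the presection property, sectional convexity, and the rank condition $|\zs_x^+\zS_0|=\text{rk}\,K_0(B)$.

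\smallskip

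The rank condition is the most straightforward. The three pieces $\tau^{-2}(\calj\cup\calj^-)$, $\tau^{-1}\cale$, and $\zS\setminus H_x$ are pairwise disjoint in $\Gamma(\text{mod}\,B)$. Since $\cale$ consists of non-injective modules, the translation $\tau^{-1}$ is defined and injective there; while on $\calj$ the map $\tau^{-2}$ is given by the convention $\tau^{-2}I(y) = P(y)$ recorded in the definition, and on $\calj^-$ it is the usual iterated translation. It follows that the reflected region has cardinality $|H_x|$, whence $|\zs_x^+\zS_0|=|H_x|+|\zS\setminus H_x|=|\zS_0|$.

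\smallskip

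For the presection property, I would argue by case analysis on the location of an arrow $X\to Y$ in the transjective component. The key point is that $\tau^{-1}$ carries arrows in the AR-quiver to arrows whenever the source remains non-injective, so that arrows lying entirely inside the reflected region arise from arrows within $H_x$, while arrows inside $\zS\setminus H_x$ inherit the presection property directly from $\zS$. At the seam between the two regions, the defining rules (a)--(c) of $H_x$ have been engineered to block precisely the boundary configurations that would violate presection: rule (c) forbids an arrow $L\to M$ with $L\in H_x$ having an injective successor in $H_x$ from escaping $H_x$, while closure of $H_x$ under predecessors handles the dual direction. Sectional convexity then follows by pulling a sectional path in $\zs_x^+\zS$ back under the appropriate translation to a sectional path in $\zS$, splitting the path at the seam via the presection analysis when it crosses the boundary, and invoking sectional convexity of $\zS$.

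\smallskip

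The main obstacle will be the seam analysis underlying both the presection and sectional convexity verifications. One must show that the axioms (a)--(c) defining $H_x$ are strong enough to rule out every boundary configuration that would otherwise invalidate the reflection. My plan here is to induct on the filtration $H_1\subseteq H_2\subseteq\ldots$ from the construction of $H_x$ and to check, at each enlargement by rule (c) or by predecessor closure, that the potential seam violation it introduces is neutralized. The most delicate case is the interplay between $\calj^-$ and arrows leaving $H_x$, together with the injective-to-projective identification $\tau^{-2}\calj=\{P(y):I(y)\in\calj\}$; verifying that these interact correctly with the mesh relations is where I would expect to spend most of the effort.
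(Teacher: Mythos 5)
This statement is quoted by the paper from \cite[Theorem 4.4]{AsScSe} without proof, so there is no in-paper argument to compare against; judged on its own terms, your proposal is a roadmap rather than a proof, and the parts it leaves open are exactly the parts that carry the content of the theorem. You say yourself that the ``seam analysis'' --- verifying the presection property and sectional convexity where $\tau^{-2}(\calj\cup\calj^-)\cup\tau^{-1}\cale$ meets $\zS\setminus H_x$ --- is ``where I would expect to spend most of the effort,'' and you only describe a plan (induct on the filtration $H_1\subseteq H_2\subseteq\cdots$) without executing it. That is the whole theorem: conditions (a)--(c) defining $H_x$ were designed precisely so that this seam works out, and showing that they suffice is not a routine check. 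In particular, an arrow $L\to M$ with $L\in\cale$ and $M\in\zS\setminus H_x$ forces $\tau^{-1}L\to M$ or $\tau^{-1}L\to \tau^{-1}M$ considerations that depend on whether $M$ itself should have been absorbed into $H_x$ by rule (c), and ruling out the bad configurations requires using the assumption that $\zS$ is \emph{rightmost} (all sources injective), which your sketch never invokes.

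There is also a concrete gap in the part you do treat as finished, the cardinality count. For $L\in\calj^-$ you assert that $\tau^{-2}L$ ``is the usual iterated translation,'' but nothing in your argument guarantees that $\tau^{-1}L$ is not injective; if it were, $\tau^{-2}L=0$ in $\textup{mod}\,B$ and the count $|\zs_x^+\zS_0|=|\zS_0|$ fails. Similarly, injectivity of $\tau^{-1}$ on $\cale$ and disjointness of the translated pieces from $\zS\setminus H_x$ rest on the fact that a local slice meets each $\tau$-orbit of the transjective component exactly once, which you use implicitly but do not establish. The clean way to handle all of this --- and the way the cited source does --- is to lift $\zS$ to the cluster category $\calc$, where $\tau$ is an autoequivalence and the convention $\tau^{-2}I(y)=P(y)$ becomes the literal identity $\tau^{-2}\widetilde{I(y)}=\widetilde{P(y)}$; the counting and the presection property are then checked for the lifted configuration, and the remaining task is to show that no object of the reflected set is a summand of $\tau T$ (so that it corresponds to a nonzero $B$-module). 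Your proposal never passes to $\calc$, which is why both the $\tau^{-2}$ bookkeeping and the seam analysis remain out of reach as written.
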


\section{Main results}
In this section, we prove our main results. We start with two preparatory lemmas.
\begin{definition}
 Let $B$ be a representation-infinite  cluster-tilted algebra and let $\zS,\zS'$ be two local slices in $\textup{mod}\,B$ and $\widetilde{\zS},\widetilde{\zS'}$ be their lifts in the cluster category $\calc$. Then for every indecomposable module $X$ in $\zS$, we define
 $d_X(\zS,\zS')$ to be the unique integer $k$ such that $\tau_\calc^{-k}\widetilde X $ lies in $\widetilde{\zS'}$, where $\widetilde X $ is the lift of $X$ in $\calc$.
\end{definition}
 \begin{remark}
In the above definition, the condition that $B$ is representation-infinite is necessary for the uniqueness of the integer $k$.
\end{remark}

\begin{lemma}\label{lem 4}
 Let $B$ be a representation-infinite cluster-tilted algebra. Let $\zS$ be a rightmost local slice in $ \textup{mod}\,B$ with source $I(x)$, and $H_x$ the completion in $\zS$. Suppose that $\zS'$ is another local slice such that $d_{I(x)}(\zS,\zS') \ge2$. Then for every indecomposable module $Y$ in $H_x$ we have $$d_Y(\zS,\zS')\ge 1.$$ 
 In particular, for every injective indecomposable $I(y)$ in $H_x$ we have $$d_{I(y)}(\zS,\zS')\ge 2.$$
  \end{lemma}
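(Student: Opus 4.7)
The plan is to prove both conclusions simultaneously by induction on the iterative construction $H_1\subseteq H_2'\subseteq H_2\subseteq H_3'\subseteq\cdots$ of $H_x$ recalled in Section~\ref{sect 25}, carrying at each stage the strengthened inductive hypothesis that every injective module already in $H_n$ satisfies $d\ge 2$. This strengthening is exactly what the closure condition (c) will need in the inductive step.

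Two ingredients about the function $d$ would be established first. The first is a monotonicity principle: for any arrow $X\to Y$ in the Auslander--Reiten quiver of the cluster category $\calc$, one has $d_X-1\le d_Y\le d_X$. This follows from $\widetilde{\zS'}$ behaving as a presection in $\calc$: applying $\tau_\calc^{-d_X}$ to the arrow and invoking the presection condition at $\tau_\calc^{-d_X}X\in\widetilde{\zS'}$ gives either $\tau_\calc^{-d_X}Y\in\widetilde{\zS'}$ or $\tau_\calc^{-(d_X-1)}Y\in\widetilde{\zS'}$, and the symmetric argument starting from $\tau_\calc^{-d_Y}Y$ gives the other inequality. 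The second ingredient is the key observation that for every $B$-injective $I(y)$ the object $\tau_\calc^{-1}\widetilde{I(y)}$ is isomorphic in $\calc$ to $\widetilde{P(y)}[1]$, a shifted summand of the cluster-tilting object defining $B$; such shifted summands are exactly the indecomposables of $\calc$ that do not lift indecomposable $B$-modules. Consequently $\tau_\calc^{-1}\widetilde{I(y)}\notin\widetilde{\zS'}$, so $d_{I(y)}(\zS,\zS')\ne 1$ for every local slice $\zS'$ in $\textup{mod}\,B$; this is what will upgrade any bound of the form $d_{I(y)}\ge 1$ to $d_{I(y)}\ge 2$.

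With these in place the induction is short. The base case $H_1=\{I(x)\}$ is immediate from the hypothesis. For the inductive step, a module $M$ newly added to $H_{n+1}'$ via (c) comes with an arrow $L\to M$ in $\zS$ and an injective successor $I'\in H_n$ of $L$; the inductive hypothesis gives $d_{I'}\ge 2$, repeated monotonicity along the path $L\to\cdots\to I'$ in $\zS$ yields $d_L\ge d_{I'}\ge 2$, and monotonicity across $L\to M$ gives $d_M\ge d_L-1\ge 1$. If in addition $M$ is injective, the key observation rules out $d_M=1$ and hence forces $d_M\ge 2$. The subsequent closure under predecessors in $\zS$ is handled in the same spirit: whenever $Y\to Z$ with $Z$ already in $H_{n+1}'$, monotonicity gives $d_Y\ge d_Z\ge 1$, upgraded to $d_Y\ge 2$ in the injective case by the key observation.

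The main obstacle I anticipate is the rigorous justification of the two ingredients, both of which require passing between $\textup{mod}\,B$ and $\calc$. Monotonicity needs the standard fact that lifts of local slices behave as presections in the AR-quiver of $\calc$, while the key observation requires identifying $\tau_\calc^{-1}\widetilde{I(y)}$ with $\widetilde{P(y)}[1]$ and using that the shifted summands of the cluster-tilting object are precisely the indecomposables of $\calc$ falling outside $\textup{mod}\,B$. The identity $\tau_\calc^{-1}\widetilde{I(y)}=\widetilde{P(y)}[1]$ is essentially the same calculation underlying the reflection recipe of Section~\ref{sect 25}, where the consequence $\tau_\calc^{-2}I(y)=P(y)$ already appears. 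Once these structural inputs are in place, the induction itself is essentially forced.
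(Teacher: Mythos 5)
Your proposal is correct and follows essentially the same route as the paper: induction along the recursive construction $H_1\subseteq H_2'\subseteq H_2\subseteq\cdots$ of $H_x$, using the presection property of the lifted slice $\widetilde{\zS'}$ to get the monotonicity of $d$ along arrows, and the fact that $\tau^{-1}$ of an injective falls outside $\textup{mod}\,B$ to upgrade $d\ge 1$ to $d\ge 2$ for injectives. Your explicit strengthening of the inductive hypothesis (all injectives in $H_n$ satisfy $d\ge 2$) is exactly what the paper's induction implicitly relies on, and your cluster-category identification $\tau_\calc^{-1}\widetilde{I(y)}\cong\tau T_y$ is just a more detailed justification of the paper's remark that $\tau^{-1}I(y)=0$ cannot lie in $\zS'$.
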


\begin{proof}
 Let $\{I(x)\}=H_1\subset H_2\subset \cdots\subset H_r= H_x$ be the recursive construction of $H_x$ as in section \ref{sect 25} above. Recall that given $H_{i-1}$, the set $H_{i}'$ is the closure of $H_{i-1}$ with respect to condition (c) of the definition of $H_x$, and $H_{i}$ is the closure of $H_{i}'$ under predecessors. 
 Let $Y\in H_i\setminus H_{i-1}$. We will prove the result by induction on $i$. 
 
 If $i=1$ then $Y=I(x)$ and we have $d_Y(\zS,\zS')\ge 2$ by assumption.
 Now assume that $i>1$. Then there are two possibilities
 
 a) Suppose first that $Y\in H_i'$. Then there exists an arrow $L\to Y$ in $\zS$ with $L\in H_{i-1}$ having  an injective successor $I$ in $H_{i-1}$. So there is a path 
 \[\ell:L=L_0\to L_1 \to \cdots \to L_{s-1}\to L_s=I\] in $H_{i-1}$ and our induction hypothesis yields 
  $d_{L_s}(\zS,\zS')=k\ge 2.$
 In the cluster category $\calc$, denote by $\widetilde I$, $\widetilde{L}_i$ and $\widetilde{\zS'}$ the lifts of $I$, $L_i$ and $\zS'$, respectively. Then $\tau^{-k}_\calc \widetilde I \in \widetilde {\zS'}$. Moreover, since $\widetilde{L}_{s-1}\to \widetilde{L}_s$ is an arrow in $\widetilde \zS$, there is an arrow $\tau^{-k}_\calc \widetilde{L}_{s-1}\to\tau^{-k}_\calc \widetilde{L}_s$ in the Auslander-Reiten quiver of $\calc$, and because $\widetilde{\zS'}$ is a local slice, this implies that either 
$\tau^{-k}_\calc \widetilde{L}_{s-1}$ or $\tau^{-(k+1)}_\calc \widetilde{L}_{s-1}$ is in $\widetilde{\zS'}$. In particular
$d_{L_{s-1}}(\zS,\zS')\ge d_{L_{s}}(\zS,\zS')\ge 2.$
Repeating this argument for every arrow in the path $\ell$ we see that 
$d_{L_{i}}(\zS,\zS')\ge 2, \textup{ for all $i$, } $
and thus
$d_{L}(\zS,\zS')\ge 2$. 
This implies that 
$d_{Y}(\zS,\zS')\ge 1$, since there is an arrow $L\to Y$.

b) Now suppose that $Y\in H_i\setminus H_i'$. Thus $Y$ is obtained by closing under predecessors. Hence there is a path 
$\ell': Y=L_0'\to L_1'\to \cdots\to L_t'$
with $L_t'\in H_i'$. In particular, $d_{L'_t}(\zS,\zS')\ge 1$, by part a). By the same argument as in case a), going back along the path $\ell'$ will not decrease the values of the function $d$, so we see that  
$d_{Y}(\zS,\zS')\ge d_{L'_{t}}(\zS,\zS')\ge 1.$ This shows the first claim.
Now, if $Y$ is injective then $d_{Y}(\zS,\zS')$ cannot be equal to 1, because $\tau^{-1} Y=0$ is not in $\zS'$. This shows the second claim.
\end{proof}

For the proof of the next lemma, we need the following construction.
Let $(\Gamma, \tau)$ be a translation quiver, and $X$ be a point in $\Gamma$. Then we define 
  $${\Sigma (\to X) = \left\{ Y\in\Gamma\left| \begin{array}{c}\textup{ \footnotesize there exists a sectional path from $Y$ to $X$ in $ \zG$}\\ \textup{\footnotesize and every path from $ Y$  to  $X$  in $ \zG $  is sectional.}\end{array}\right.\right\}},$$
 $${\Sigma ( X\to) = \left\{ Y\in\zG         \left|\footnotesize \begin{array}{c}\textup{there exists a sectional path from $X$ to $Y$ in $ \zG$}\\ \textup{and every path from } X \text{ to } Y\text{ in } \zG  \text{  is sectional.}\end{array}\right.\right\}}.$$
%

\begin{prop}\cite[4.2 (6), p. 185]{R} \label{2.6}
Let $Y$ be an indecomposable sincere module in a postprojective or preinjective component.  Then both $\Sigma(\rightarrow Y)$ and $\Sigma(Y\rightarrow)$ are complete slices.  
\end{prop}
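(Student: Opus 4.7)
My plan is to verify the four axioms of a complete slice from Section~2.2 directly for $\Sigma(\to Y)$; the case of $\Sigma(Y\to)$ is dual. Fix $Y$ in a postprojective component $\Gamma$ (the preinjective case is symmetric). Axiom (1), sincerity, is immediate: $Y$ itself lies in $\Sigma(\to Y)$ via the length-zero sectional path from $Y$ to $Y$, so $\bigoplus_{U\in\Sigma(\to Y)} U$ contains the sincere module $Y$ as a summand and is therefore sincere. This is the one place where the sincerity hypothesis on $Y$ enters essentially.

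The remaining axioms rest on a single combinatorial fact from the Riedtmann structure of postprojective components: $\Gamma$ embeds as a translation subquiver of $\mathbb{Z} Q$ for some acyclic quiver $Q$, and in this model a length-two path $A_0\to A_1\to A_2$ in $\Gamma$ is non-sectional precisely when $A_0=\tau A_2$. For axiom (2) (convexity), given a path $U_0\to\cdots\to U_t$ with $U_0,U_t\in\Sigma(\to Y)$, one concatenates with the sectional path $U_t\to\cdots\to Y$; since $U_0\in\Sigma(\to Y)$, the resulting path $U_0\to\cdots\to Y$ must be sectional, whence every tail $U_i\to\cdots\to Y$ is sectional, and any hypothetical non-sectional path $U_i\to\cdots\to Y$ would, after prepending the (automatically sectional) initial segment $U_0\to\cdots\to U_i$, contradict $U_0\in\Sigma(\to Y)$. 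For axiom (3), if both $M$ and $\tau M$ lay in $\Sigma(\to Y)$, combining the sectional path $M\to\cdots\to Y$ with a mesh arrow $\tau M\to E\to M$ yields a non-sectional path from $\tau M$ to $Y$, contradicting $\tau M\in\Sigma(\to Y)$.

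Axiom (4) requires slightly more care. Given an irreducible $M\to S$ with $S\in\Sigma(\to Y)$, fix a sectional path $S\to S_1\to\cdots\to Y$. If $S_1\neq\tau^{-1}M$, prepending $M\to S$ produces a sectional path from $M$ to $Y$, and by an argument analogous to the one for axiom (2), every path from $M$ to $Y$ is then sectional, so $M\in\Sigma(\to Y)$. Otherwise $S_1=\tau^{-1}M$ (forcing $M$ non-injective), the tail $\tau^{-1}M\to S_2\to\cdots\to Y$ is sectional, and a symmetric argument places $\tau^{-1}M$ in $\Sigma(\to Y)$.

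The main obstacle I foresee is the ``every path is sectional'' clause in the verification of axiom (4). The clean way to dispatch it is by contraposition using the mesh structure: any non-sectional path from $M$ or $\tau^{-1}M$ to $Y$ would, by applying the mesh relation at the offending intermediate vertex to reroute through the mesh partner, produce a non-sectional path from $S$ to $Y$, contradicting $S\in\Sigma(\to Y)$. The postprojective (respectively preinjective) assumption enters crucially here, since in such components sectional paths are faithfully tracked by the $\mathbb{Z} Q$-embedding, which is what makes the mesh-based rerouting argument terminate and stay inside $\Gamma$.
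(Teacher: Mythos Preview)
The paper does not prove this proposition; it is quoted verbatim from Ringel \cite[4.2(6)]{R} and invoked as a black box in the proof of Lemma~\ref{lem 5}. There is no argument in the paper to compare against.

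Evaluated on its own, your verification of axioms (1) and (3) is correct. For axiom (2) there is a small mismatch you should flag: the paper's convexity axiom speaks of chains of arbitrary nonzero morphisms between indecomposables, not of paths in $\Gamma$. In a postprojective component these notions coincide (a nonzero morphism between indecomposables yields a path of irreducibles, and any intermediate $U_i$ is forced to be postprojective since there are no nonzero maps from regular or preinjective modules to postprojective ones), but this reduction should be stated.

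Your argument for axiom (4) has a genuine error. The dichotomy ``$S_1\ne\tau^{-1}M$'' versus ``$S_1=\tau^{-1}M$'' depends on which sectional path from $S$ to $Y$ you happened to fix, and in the first case your conclusion $M\in\Sigma(\to Y)$ can be false. Take $Q$ with arrows $1\to 2$, $2\to 3$, $1\to 3$; in $\mathbb{Z}Q$ set $Y=(1,1)$, $S=(0,2)$, $M=(0,1)$. Then $S\in\Sigma(\to Y)$, and choosing the sectional path $(0,2)\to(0,3)\to(1,1)$ gives $S_1=(0,3)\ne\tau^{-1}M=(1,1)$; yet $M\notin\Sigma(\to Y)$, since $(0,1)\to(0,2)\to(1,1)$ is non-sectional (indeed $\tau(1,1)=(0,1)$). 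The ``analogous to axiom (2)'' remark does not apply here, because there is no vertex to prepend before $M$, and the ``mesh rerouting'' sketch does not rescue it either: a non-sectional path from $M$ to $Y$ need not pass through $S$ at its second step, and rerouting through a mesh does not manufacture a path starting at $S$. A correct argument must use more of the $\mathbb{Z}Q$-structure---for instance, splitting on whether \emph{any} path from $S$ to $Y$ passes through $\tau^{-1}M$, rather than on the first step of one chosen path.
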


\begin{lemma}
 \label{lem 5}
 Let $M$ be an indecomposable transjective $B$-module which does not lie on a local slice. Then there exist an indecomposable injective $B$-module $I(j)$ and a local slice $\Sigma$ containing a sectional path 
  $v: \tau M\to\cdots\to I(j).$
 \end{lemma}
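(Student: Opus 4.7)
The plan is to exhibit a rightmost local slice $\zS$ of $\mathrm{mod}\,B$ and an injective source $I(x)$ of $\zS$ such that $\tau M$ lies in the subset $\calj^-$ of the completion $H_x$; from the inductive construction of $H_x$ one then reads off a path $\tau M\to\cdots\to I(j)$ inside $\zS$ which is automatically sectional, since a local slice is a complete slice of the tilted algebra $B/\Ann_B\zS$ and paths in complete slices are sectional.

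First I would verify that $\tau M$ is a nonzero indecomposable transjective $B$-module. Every indecomposable projective $B$-module lies on the canonical local slice obtained by projecting the summands of the cluster-tilting object $T$ defining $B$ to $\mathrm{mod}\,B$ via $\Hom_{\calc}(T,-)$, so the hypothesis forces $M$ to be non-projective; hence $\tau M\neq 0$ and is still transjective.

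Next I would locate the rightmost slice $\zS$. Starting from any rightmost local slice and repeatedly applying the reflection of Theorem~\ref{thm reflection}, one produces a sequence of rightmost local slices whose vertex sets together sweep through the transjective component, modulo certain omissions. Inspecting
\[\zs_x^+\zS=\tau^{-2}(\calj\cup\calj^-)\cup\tau^{-1}\cale\cup(\zS\setminus H_x),\]
one sees that the transjective modules omitted by a given reflection step are exactly the modules of the form $\tau^{-1}Y$ with $Y\in\calj^-$: each $Y\in\cale$ reappears through $\tau^{-1}Y$ in $\zs_x^+\zS$, while each $Y\in\calj$ is injective so $\tau^{-1}Y=0$ and nothing is genuinely lost there. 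Because $M$ lies on no local slice at all, $M$ must be omitted at some such step, so there is a rightmost slice $\zS$ with an injective source $I(x)$ for which $\tau M\in\calj^-$ of $H_x$. Lemma~\ref{lem 4} is the technical tool ensuring that the iteration of reflections does reach $\tau M$ without skipping past it, by controlling the integer $d_{\tau M}(-,-)$ along the chain of slices.

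Finally I extract the sectional path from the construction of $H_x$. By definition of $\calj^-$, the module $\tau M\in H_x$ has an injective successor $I(j)\in H_x$, and tracing the inductive build-up of $H_x$ one obtains an actual path $\tau M\to\cdots\to I(j)$ whose arrows all belong to $\zS$. A non-sectional subpath $X\to Y\to Z$ with $\tau Z=X$ inside $\zS$ would put both $X$ and $\tau^{-1}X=Z$ into $\zS$, which contradicts the fact that $\zS$ is a complete slice of the tilted algebra $B/\Ann_B\zS$; hence the path is sectional and yields the desired $v$.

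The main obstacle is the middle step: rigorously showing that every transjective module missing from every local slice is of the form $\tau^{-1}Y$ with $Y\in\calj^-$ of some completion. This is a coverage property of the iterated reflection dynamics on rightmost slices, and it is precisely the setting in which Lemma~\ref{lem 4} is designed to be used.
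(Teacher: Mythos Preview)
Your approach is genuinely different from the paper's, and the main obstacle you flag at the end is a real gap that Lemma~\ref{lem 4} does not close.

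The paper's proof never touches reflections or completions. It lifts $M$ to $\widetilde{M}$ in the cluster category, uses Proposition~\ref{2.6} to see that $\Sigma(\widetilde{M}\to)$ is a slice (after arranging sincerity), and observes that since this slice fails to descend to a local slice in $\mathrm{mod}\,B$ it must contain some $\tau T_j$; applying $\tau$ gives a sectional path $\tau\widetilde{M}\to\cdots\to\tau^2 T_j$, and the slice $\Sigma(\to\tau^2 T_j)$ is shown, via the rigidity of $T$, to avoid $\tau T$ entirely and hence to descend to the desired local slice $\Sigma$ in $\mathrm{mod}\,B$ containing $\tau M\to\cdots\to I(j)$. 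The whole argument is a two-line calculation in $\calc_A$.

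Your plan, by contrast, is to run the reflection dynamics and argue that $M$ must be ``skipped'' at some step, placing $\tau M$ in some $\calj^-(H_x)$. This is in fact the full implication (a)$\Rightarrow$(b) of Theorem~\ref{thm ls}, which the paper proves \emph{using} Lemma~\ref{lem 5} as input. The circularity is not fatal in principle, but it means you are attempting the harder statement directly, and the missing ingredient is precisely what Lemma~\ref{lem 5} supplies: a local slice through $\tau M$. Without first knowing that $\tau M$ lies on \emph{some} local slice, your coverage argument cannot get started. Lemma~\ref{lem 4} only compares two slices that are already given; it controls $d_Y(\Sigma,\Sigma')$ for $Y\in H_x$, but it produces no slice through a prescribed module and gives no information about a module not yet known to sit on any slice. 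Concretely, to invoke $d_{\tau M}(-,-)$ at all you need $\tau M$ (or at least some $\tau^k M$) to belong to a local slice, and nothing in your outline establishes this.

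A minor point: the ``canonical local slice'' of projectives you invoke does not exist in general. Projectives can lie in tubes (as in the paper's first example), and even the transjective projectives need not form a local slice, so the argument that $M$ is non-projective needs a different justification.
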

 
 \begin{proof}
 Let $A$ be a hereditary algebra and $T\in\mathcal{C}_A$ a cluster-tilting object such that $B=\text{End}_{\mathcal{C}_A}(T)$.  Let $M$ be an indecomposable $B$-module in the transjective component $\mathcal{T}$ of $\Gamma(\text{mod}\,B)$, and let $\widetilde{M}\in\mathcal{C}_A$ be an indecomposable object such that $\text{Hom}_{\mathcal{C}_A}(T, \widetilde{M})=M$.  Finally, let $\widetilde{\Sigma}=\Sigma(\widetilde{M}\to)$ in the cluster category $\mathcal{C}_A$.  
 
Since $B\cong \text{End}_{\mathcal{C}_A}(\tau_{\mathcal{C}_A}^l T)$ for all $l\in\mathbb{Z}$, we may assume without loss of generality that $\widetilde{\Sigma}$ lies in the postprojective component of $\text{mod}\,A$.  Furthermore, we may assume that every postprojective successor of $\widetilde{\Sigma}$ in $\text{mod}\,A$ in sincere.  Indeed this follows from the fact that there are only finitely many isoclasses of indecomposable postprojective $A$-modules that are not sincere.  For tame algebras this holds, because non-sincere modules are supported on a Dynkin quiver, and for wild algebras see \cite[Corollary 2.3]{Ke}. 

Now since $\widetilde{M}$ is a sincere $A$-module, Proposition \ref{2.6} implies that $\widetilde{\Sigma}$ is a slice in $\text{mod}\,A$, and therefore a local slice in $\mathcal{C}_A$.   Let $\Sigma_1=\text{Hom}_{\mathcal{C}_A}(T, \Sigma(\widetilde{M}\to))$.  Then $M\in\Sigma_1$, and thus by assumption $\Sigma_1$ is not a local slice in $\text{mod}\,B$.  Therefore, there exists an indecomposable direct summand $T_j$ of $T$ such that $\tau T_j\in \Sigma(\widetilde{M}\to)$.  Moreover, by definition of $\Sigma(\widetilde{M}\to)$ there is a sectional path 
 $\widetilde{M}\to\cdots\to \tau T_j $
 and every path from $\widetilde{M}$ to $\tau T_j$ is sectional.  Applying $\tau$ we see that there exists a sectional path 
 $\tilde{v}: \tau \widetilde{M}\to\cdots\to \tau^2 T_j$
 and every path from $\tau\widetilde{M}$ to $\tau^2 T_j$ is sectional.   Thus the local slice $\Sigma(\to \tau^2 T_j)$ in $\mathcal{C}_A$ contains the path $\tilde{v}$.   If there exists a summand $T_i$ of $T$ such that $\tau T_i\in\Sigma(\to \tau^2 T_j)$ then 
 $ 0\not= \text{Hom}_{\mathcal{C}_A}(\tau T_i, \tau^2 T_j)\cong D\text{Ext}^1_{\mathcal{C}_A}(T_j, T_i)$
 which is impossible.  Thus, the local slice $\Sigma(\to \tau^2 T_j)$ does not contain summands of $\tau T$.  Therefore, $\Sigma = \text{Hom}_{\mathcal{C}_A}(T, \Sigma(\to\tau^2 T_j))$ is a local slice in $\text{mod}\,B$ containing $\tau M$ and containing a sectional path 
  $v=\text{Hom}_{\mathcal{C}_A}(T, \tilde{v}): \tau M\to\cdots\to I(j).$
\end{proof}

We are now ready for our main result.
\begin{thm}\label{thm ls}
 Let $B$ be a cluster-tilted algebra and $M$ an indecomposable transjective $B$-module. Then the following are equivalent.
 \begin{itemize}
\item [\textup{(a)}] 
 $M$ does not lie on a local slice.
\item [\textup{(b)}] There exist a rightmost slice $\zS$ with source $I(x)$  such that  the completion $H_x$ contains a sectional path
 \[\omega :I(x)\to\cdots\to\tau M\to \cdots\to I(j)\] with  $ I(j)$ injective. In particular $\tau M\in \calj^- (H_x)$.
\end{itemize} 
 \end{thm}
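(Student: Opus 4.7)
The plan is to handle the two implications separately. For (b) $\Rightarrow$ (a), I argue by contradiction: suppose $M$ lies on a local slice $\zS'$, and set $d_Y = d_Y(\zS,\zS')$. The equality $M = \tau_\calc^{-1}(\tau M) \in \zS'$ gives $d_{\tau M} = 1$. A one-step presection argument shows that for any arrow $X \to Y$ in $\Gamma(\textup{mod}\,B)$ we have $d_Y \in \{d_X, d_X - 1\}$, so $d$ is non-increasing along sectional paths in $\zS$. Splitting the path $\omega$ at $\tau M$ yields $d_{I(x)} \ge 1$ from the left half and $d_{I(j)} \le 1$ from the right half. Lemma~\ref{lem 4} then says that $d_{I(x)} \ge 2$ forces $d_{I(j)} \ge 2$ (because $I(j) \in H_x$ is injective), contradicting $d_{I(j)} \le 1$. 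Hence $d_{I(x)} = 1$, but the sub-argument inside the proof of Lemma~\ref{lem 4} (that $d_Y = 1$ is impossible for an injective $Y$, since $\tau^{-1} Y = 0$) applied to the injective $I(x)$ yields the final contradiction.

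For (a) $\Rightarrow$ (b), I start with Lemma~\ref{lem 5}, which produces a local slice $\zS_0 \ni \tau M$ and a sectional path $v : \tau M \to \cdots \to I(j)$ ending at an injective. The strategy is to convert $\zS_0$ into a rightmost slice $\zS$ by a sequence of reflections from section~\ref{sect 25}, arranged so that each reflection preserves the sectional path inside the current slice. The resulting rightmost $\zS$ then has an injective source $I(x)$ such that the extended sectional path $I(x) \to \cdots \to \tau M \to \cdots \to I(j)$ lies entirely in $\zS$. It remains to verify that the completion $H_x$ captures this extended path: by condition (a), $I(x) \in H_x$; the predecessor closure (condition (b)) applied to $I(j) \in H_x$ adds the sub-path ending at $I(j)$; and the forward closure (condition (c)), triggered because each vertex of the path has the injective $I(j)$ as a successor in $H_x$, adds the sub-path starting at $I(x)$. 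In particular, $\tau M$ is non-injective and has the injective successor $I(j)$ in $H_x$, so $\tau M \in \calj^-(H_x)$, as required.

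The main obstacle is the direction (a) $\Rightarrow$ (b): engineering the sequence of reflections so that the resulting rightmost slice still contains the whole extended sectional path and so that its injective source $I(x)$ sits at the starting vertex of that path. Once this has been achieved, the membership of the path in $H_x$ and of $\tau M$ in $\calj^-(H_x)$ both fall out of the inductive construction of the completion. The (b) $\Rightarrow$ (a) direction is considerably more mechanical, reducing to the non-increasing nature of $d$ on sectional paths combined with the two conclusions of Lemma~\ref{lem 4} applied at the two injective endpoints of $\omega$.
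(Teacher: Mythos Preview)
Your argument for (b) $\Rightarrow$ (a) is essentially the paper's, with a cosmetic difference: you obtain the weaker bounds $d_{I(x)} \ge 1$ and $d_{I(j)} \le 1$ and then invoke the ``$d_Y \neq 1$ for injective $Y$'' observation twice, whereas the paper pushes one more step along the path on each side to get $d_{I(x)} \ge 2$ and $d_{I(j)} \le 0$ before applying Lemma~\ref{lem 4} once. Both work.

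The direction (a) $\Rightarrow$ (b), however, has a real gap. After producing the rightmost slice $\zS$ and the path $I(x)\to\cdots\to\tau M\to\cdots\to I(j)$ inside $\zS$, you assert that this path lies in $H_x$ by writing ``the predecessor closure applied to $I(j)\in H_x$\ldots''. But you have not shown $I(j)\in H_x$; this is precisely the delicate point. Predecessor closure only pulls things \emph{back} from elements already in $H_x$, and condition~(c) only pushes forward from an $L\in H_x$ that \emph{already} has an injective successor in $H_x$. Starting from $I(x)$ alone, neither operation forces $I(j)$ into $H_x$ in general.

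The paper confronts this head-on with a case analysis. If $I(j)\in H_{x}$, predecessor closure finishes as you say. If $I(j)\notin H_{x}$ but $\tau M\in H_{x}$, the paper \emph{reuses hypothesis~(a)}: were $\tau M\in\cale(H_x)$, the reflection $\zs_x^+\zS$ would be a local slice containing $M$, contradicting~(a); hence $\tau M\in\calj^-(H_x)$, which supplies a \emph{different} injective $I'$ with a path $\tau M\to\cdots\to I'$ inside $H_x$, and one takes $\omega$ to end at $I'$ rather than at $I(j)$. If even $\tau M\notin H_{x}$, the path $v$ survives intact into the reflected slice $\zs_x^+\zS$ and one iterates. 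Your sketch never invokes~(a) after Lemma~\ref{lem 5}, so it cannot produce the replacement injective in the middle case, and your claim that the chosen $I(j)$ lands in $H_x$ is unjustified.
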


\begin{proof} (a)$ \,\Rightarrow \,$(b).
 By Lemma \ref{lem 5}, there is an indecomposable injective $I(j)$ and a local slice $\zS_1$ containing a sectional path 
$v: \tau M\to\cdots\to I(j).$
 Without loss of generality we may assume that there is no other injective on the path $v$ and that $\zS_1$ is a rightmost local slice. Let $u_1:I(x_1)\to\cdots\to \tau M$ be a maximal path in $\zS_1$ ending in $\tau M$. Thus $I(x_1)$ is a source in the rightmost local slice $\zS_1$, hence $I(x_1)$ is injective. Moreover, since $\tau M$ is not an injective module, $ I(x_1) \ne \tau M$.  We distinguish two cases.

(1) If $I(j)\in H_{x_1}$, then  the composition $\omega = u_1v$  lies entirely inside ${ H_{x_1}}$, because ${H_{x_1}}$ is closed under predecessors and we are done.

(2) Now suppose that $I(j)\notin H_{x_1}$.

(2.1) If $\tau M\in H_{x_1}$ then $\tau M$ must lie in $\calj^-$ of $H_{x_1}$, because otherwise the reflection $\zs_{x_1}^+\zS_1$ would be a local slice containing $M$ which is impossible by (a). But $\tau M\in\calj^-$ implies the existence of a path $v':\tau M\to\cdots\to I'$ in $H_{x_1}\subset \zS_1$ with $I'$ injective, and then the path
\[ \omega=u_1v':I(x_1)\to \cdots\to \tau M\to \cdots\to I'\] 
lies entirely inside $H_{x_1}$, and we are done. Note that $\omega $ is sectional since it is a path in a local slice.

(2.2) If $\tau M\notin H_{x_1}$, then the path $v$ lies entirely in $\zS_1\setminus H_{x_1}$ and thus $v$ lies entirely in the local slice $\zS_2=\zs_{x_1}^+\zS_1$.
Repeating the argument, we either obtain a local slice  with source $I(x)$ such that $I(j)\in H_{x}$ and we conclude by the argument of case (1), or we obtain a local slice $\zS_k=\zs_{x_{k-1}}^+\cdots\zs_{x_2}^+\zs_{x_1}^+\zS_1$ containing $v$ and a path $u_k:I(x_k)\to\cdots\to \tau M$
with $I(x_k)$ an injective source and $\tau M\in H_{x_k}$, and we conclude by the   argument of case (2.1).

(b)$ \,\Rightarrow \,$(a).
We want to show that $M$ does not lie on a local slice. Suppose to the contrary that there exists a local slice $\zS_M$ containing $M$. Let $\omega, \zS$ and $H_x$ be as in the statement of the theorem.
 By the argument of the first part of the proof, we may assume without loss of generality that $I(j)\in H_x$. 
We use the following notation for the path $\omega$
\[I(x)\!\to\!\cdots\!\to\! I(i)\!\to \!L_{\text -s}\!\to\! \cdots\!\to\! L_{\text-2} \!\to\! L_{\text-1}\!\to \!\tau M \!\to \!L_1\!\to\! L_2\!\to\! \cdots\!\to\! L_r\!\to \!I(j),\]
and we assume without loss of generality that none of the $L_i$ is injective. Let $\zg$ be the path obtained by applying $\tau^{-1}$ to { a part of} $\omega $, such that
\[\zg: \tau^{-1} L_{\text-s}\!\to\!\cdots\!\to\!\tau^{-1}L_{\text-2}\!\to\!\tau^{-1} L_{\text-1}\to M \!\to\! \tau^{-1}L_1\!\to\!\tau^{-1} L_2\!\to\! \cdots\!\to\! \tau^{-1}L_r.\]
Since $M$ lies in the local slice $\zS_M$ and $M\to \tau^{-1}L_1$ is an arrow in the Auslander-Reiten quiver, we have that either $\tau^{-1}L_1$ or $L_1$ is in $\zS_M$. If $\tau^{-1}L_1\in \zS_M$ then by the same argument, we have that either $\tau^{-1}L_2 $ or $L_2$ is in $\zS_M$. Repeating this reasoning, we see that either there is an $L_i\in \zS_M$ or $\zS_M$ contains all the $\tau^{-1}L_i$ for $i=1,2,\cdots,r$. In the latter case, we have an arrow $I(j)\to\tau^{-1}L_r$ with $\tau^{-1} L_r\in \zS_M$ and thus $I(j)$ must be in $\zS_M$, since $\tau^{-1}I(j)=0$. Thus in both cases  $\zS_M\cap \omega\ne \emptyset$ and 
\begin{equation}
 \label{Eq 1}
 d_{I(j)}(\zS,\zS_M)\le 0.
\end{equation}
A similar argument along the part of the path $\zg$ from $\tau^{-1}L_{-s}$ to $M$, we see that $\zS_M\cap \tau^{-1}\zg \ne \emptyset$ and $d_{I(i)}(\zS,\zS_M)\ge 2$.
Going back along the { initial segment of the} path $\omega:I(x)\to\cdots\to I(i)$ the values of  the function $d$ cannot decrease, thus $d_{I(x)}(\zS,\zS_M)\ge 2$ as well.
Now using Lemma \ref{lem 4}, we see that $d_{I(j)}(\zS,\zS_M)\ge 2$, which is a contradiction to the inequality (\ref{Eq 1}).
\end{proof}

%
%

\begin{remark}\label{rem} For cluster-tilted algebras of tree type,  in particular for rep\-resentation-finite cluster-tilted algebras, we know from \cite{ABS2} that  every indecomposable module  lies on a local slice. 
 Thus condition (b) cannot hold in a cluster-tilted algebra of tree type.
\end{remark}

We now prove that the number of transjective modules over a cluster-tilted algebra which do not lie on a local slice is finite.

\begin{cor}\label{cor 2}
 Let $B$ be a  cluster-tilted algebra.  
Denote by  $n$ the number of isoclasses of indecomposable projective $B$-modules, and define $t$ as the maximum of the number 1 and the number of isoclasses of indecomposable transjective projective $B$-modules. Then the number of isoclasses of indecomposable transjective $B$-modules that do not lie on a local slice is at most 
 \[ (2^{t-1}-1)(n-2).\]
\end{cor}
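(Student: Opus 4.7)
The plan is to invoke Theorem \ref{thm ls} to translate the counting problem into bounding the size of a union of sets $\mathcal{J}^-(H_x)$, and then to factor that bound into a choice of injective configuration and a choice of internal position on a sectional path.

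By Theorem \ref{thm ls}, an indecomposable transjective $B$-module $M$ fails to lie on a local slice if and only if $\tau M \in \mathcal{J}^-(H_x)$ for some rightmost local slice $\Sigma$ with injective source $I(x)$. Such an $M$ is automatically non-projective (since $\tau M$ is a non-zero $B$-module), so $M \mapsto \tau M$ is an injection of the set of ``bad'' modules into
\[\mathcal{U}=\bigcup_{\Sigma,\,I(x)}\mathcal{J}^-(H_x),\]
where the union runs over rightmost local slices $\Sigma$ and their injective sources $I(x)$. It therefore suffices to bound $|\mathcal{U}|$.

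First I would restrict the range of admissible sources. Any injective source of a rightmost slice is a transjective injective $B$-module, and the transjective injectives are in bijection with the transjective projectives via the $2$-Calabi--Yau duality in the cluster category, so there are exactly $t$ of them whenever $t \geq 2$. When $t = 1$, the completion $H_x$ reduces to $\{I(x)\}$, so $\mathcal{J}^-$ is empty and the bound $0$ holds, which recovers Remark \ref{rem} for tree type.

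Assuming $t \geq 2$, any $Y \in \mathcal{J}^-(H_x)$ lies on a sectional path inside a local slice of the form
\[I(x) \to \cdots \to Y \to \cdots \to I(j);\]
such a path has at most $n$ vertices, and excluding the two injective endpoints bounds the number of admissible internal positions for $Y$ by $n-2$. For a fixed source $I(x)$, the remaining injectives in $H_x$ form a non-empty subset of the other $t-1$ transjective injectives (non-empty since $\mathcal{J}^-$ is non-empty), yielding $2^{t-1}-1$ configurations. The product of the two counts gives the candidate bound $(2^{t-1}-1)(n-2)$.

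The main obstacle is to ensure that varying the source $I(x)$ does not inflate the global count by an additional factor of $t$: one must show that the data (injective configuration, sectional position) already determines the element of $\mathcal{U}$ unambiguously, regardless of which rightmost slice witnesses it. I would handle this via the reflection procedure of Theorem \ref{thm reflection} together with Lemma \ref{lem 4}, which organise all rightmost slices into a single reflection orbit and propagate the injective pattern consistently across different sources. Sharpness of the bound would then be witnessed by an explicit family of cluster-tilted algebras, for instance of type $\widetilde{\mathbb{A}}$, in which every non-empty subset of injectives and every internal sectional position is realised by a distinct transjective module not lying on any local slice.
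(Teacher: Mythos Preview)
Your reduction to bounding $|\mathcal{U}|$ via Theorem~\ref{thm ls} matches the paper, but your factorisation of the bound is not the paper's and, as you yourself flag, it does not close.

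The paper factors as follows: for a \emph{fixed} rightmost slice $\Sigma$, the union $\bigcup_x \mathcal{J}^-(H_x)$ over all injective sources $I(x)$ of $\Sigma$ is contained in the set of non-injective modules on $\Sigma$, and since $\mathcal{J}^-\neq\emptyset$ forces $\Sigma$ to contain at least two injectives, this set has at most $n-2$ elements. The factor $2^{t-1}-1$ is then a bound on the number of rightmost local slices, obtained by lifting to the cluster category: a rightmost slice is determined by the predecessor/successor pattern of the $t$ transjective summands of $\tau T$ relative to $\widetilde\Sigma$, and the existence of any module with $\mathcal{J}^-(H_x)\neq\emptyset$ produces a sectional path $\tau T_i\to\cdots\to\tau T_j$ forcing two of these summands to be inseparable, so only $t-1$ independent binary choices remain (minus the all-predecessor configuration, which is not rightmost).

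Your factorisation instead reads $2^{t-1}-1$ as the number of possible injective subsets of $H_x$ for a fixed source $I(x)$, and $n-2$ as the number of internal positions on a single sectional path. This leaves two problems. First, you still have to range over the $t$ choices of source $I(x)$, and your proposed mechanism for absorbing that factor---that the pair (injective configuration, sectional position) determines the module independently of which rightmost slice and which source witness it---is neither proved nor true in general; Lemma~\ref{lem 4} and Theorem~\ref{thm reflection} control how completions move under reflection but do not give the uniqueness statement you need. Second, even for fixed $I(x)$ and fixed $\Sigma$, the completion $H_x$ and its injective content are determined, so there is nothing to ``configure''; the $2^{t-1}-1$ count you describe is not indexing the objects you are trying to bound. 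The paper's interpretation of $2^{t-1}-1$ as a bound on rightmost slices (via the cluster-category dichotomy) is the missing idea.
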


\begin{proof}  First observe that if $B$ is representation-finite, then the result trivially holds by Remark \ref{rem}. Assume therefore that $B$ is representation-infinite. By   Theorem~\ref{thm ls} we have that the number of indecomposable transjective $B$-modules that do not lie on a local slice is bounded above by  the cardinality of the set 
$ \cup_{\Sigma} \cup_x \mathcal{J}^-(H_x),$
where $\Sigma$ runs over all rightmost local slices and $x$ { runs over all points such that $I(x) $ is a source} in $\Sigma$.  Since $\mathcal{J}^-(H_x)\subset \Sigma$, we have 
\[ \cup_x\, \mathcal{J}^-(H_x) \subset \{ L\in\Sigma\mid L \text{ is a noninjective indecomposable } B\text{-module}\}\]
and thus 
\[ \left| \cup_x \,\mathcal{J}^-(H_x)\right| \leq n-2\]
because we need at least two injectives in $\Sigma$ for $\mathcal{J}^- (H_x)\not=\emptyset$.   

Let $B=\text{End}_{\mathcal{C}} (T)$ where $T$ is a cluster-tilting object over a cluster category $\mathcal{C}$.   Given a local slice $\Sigma$ in $\text{mod}\,B$ let $\widetilde{\Sigma}$ be the lift of $\Sigma$ to the cluster category, that is $\Sigma = \text{Hom}_{\mathcal{C}}(T, \widetilde{\Sigma})$.  We claim that the number of rightmost local slices $\Sigma$ in $\text{mod}\,B$ is at most $2^{t-1}-1$.  

Observe that for every indecomposable transjective summand $T_i$ of $T$ we have that $\tau T_i$ is a predecessor or a successor of the local slice $\widetilde{\Sigma}$ in $\mathcal{C}$.  Moreover, since the slice $\Sigma$ is rightmost it is determined by the predecessors and successors in $\tau T$ of the corresponding $\widetilde{\Sigma}$.   We have to subtract 1 because if $\widetilde{\Sigma}$ has no transjective successors in $\tau T$ then $\Sigma$ is not rightmost.  This shows that the number of local slices is at most $2^t-1$.  

Finally, for $\mathcal{J}^-(H_x)\not=\emptyset$ there must be at least two summands of $\tau T$ which cannot be separated by a local slice, because   $\mathcal{J}^-(H_x)\not=\emptyset$ implies that there is a sectional path 
$\omega: I(i)\to\cdots\to \tau M \to \cdots \to I(j)$
and in the cluster category this yields a sectional path 
$\tau^{-1}\tilde{\omega}: \tau T_i \to \cdots \widetilde{M} \to \cdots \to \tau T_j$
and $M = \text{Hom}_{\mathcal{C}}(T, \widetilde{M})$ does not lies on a local slice.  This shows that the number of local slices is at most $2^{t-1}-1$.  
\end{proof}

\section{Two examples}
We conclude with two examples. The first example shows that the bound in 
 Corollary \ref{cor 2} is sharp, and the second example illustrates the statement of the theorem.
\begin{example}
%
%
%

Let $B$ be the cluster-tilted algebra of type $\widetilde{\mathbb{A}}_{2,1}$ given by the following quiver with relations $\alpha\beta=\beta\gamma=\gamma\alpha=0$.
\[\scalebox{0.8}{\xymatrix@R10pt@C20pt{1\ar@<-1pt>[rr]_{\alpha}\ar@<1pt>[rr]&&3\ar[dl]^{\beta}\\
&2\ar[ul]^{\gamma}}}\]
The projective $B$-modules $P(1)$ and $P(3)$ lie in the transjective component of $\Gamma(\text{mod}\,B)$ while the projective $P(2)$ lies in a tube.  
%
%
The only transjective $B$-module not lying on a local slice is $S(2)$.  On the other hand the formula 
in Corollary \ref{cor 2}   
 gives 
$ (2^{t-1}-1)(n-2) = (2^{2-1}-1)(3-2)=1.$
\end{example}

\begin{example}
We give an example to illustrate Theorem \ref{thm ls}.
 Let $A$ be the path algebra of the quiver \vspace{-5 pt}
 \[\scalebox{0.8}{\xymatrix@R8pt@C40pt{ 
 &1&2\ar[l]\ar[dl]& 3\ar@/_7pt/[ll]\ar[l]\\
5&4\ar[l]& 6\ar@/^7pt/[ll]\ar[l]\\ }}
 \]
 \vspace{5pt}
 
\noindent Mutating at the vertices 2,4 and 6 yields the cluster-tilted algebra $B$ with quiver
 \vspace{-10 pt}
 
 \[\scalebox{0.8}{\xymatrix@R10pt@C40pt{ 
4\ar[rrd]\ar@/^10pt/[rrrr]&6\ar[l]&2\ar[l]&1\ar[l]&3\ar@<1pt>[l]\ar@<-1pt>[l]\ar[dll]\\
&&5\ar@<1pt>[lu]\ar@<-1pt>[lu]
}}
 \]
 \vspace{5pt}

 \noindent
 In the Auslander-Reiten quiver of $\textup{mod}\,B$ we have the following local configuration.
\vspace{-10 pt}
  \[\scalebox{0.8}{\xymatrix@!@R0pt@C1pt{ 
  &&& I(3) \ar[rrrd]
&&&& \circ&&&& P(3)
\\
&& {\begin{smallmatrix} 6\\4\\3 \end{smallmatrix}} \ar[ru]\ar[rrrddd] 
&&&& {\begin{smallmatrix} 2\\6\\4 \end{smallmatrix}} \ar[rrrd]\ar[rrrddd]
&&&&R\ar[ru]
\\
&I(1)\ar[ru]\ar@/^15pt/[rruu]
&&&&\circ 
&&&&{\begin{smallmatrix} 1\\2\\6\\4 \end{smallmatrix}}\ar[ru]\ar@/^15pt/[rruu]
\\
&&{\begin{smallmatrix} 4\end{smallmatrix}}\ar[drrr]
&&&&{\begin{smallmatrix} 6 \end{smallmatrix}}\ar[drrr]\ar[rrdd]
&&&&\cdot
\\
&{\begin{smallmatrix} 4\\3 \end{smallmatrix}}\ar[ru]\ar[ruuu]
&&&&{\begin{smallmatrix} 6\\4 \end{smallmatrix}}\ar[ru]\ar[ruuu]\ar[rrrd]
&&&&{\begin{smallmatrix} 2\ 5\\666\\4 \end{smallmatrix}}\ar[ru]\ar[ruuu]
\\
I(5)\ar[ru]\ar@/_15pt/[rruu]
&&&&\circ
&&&&
P(5)\ar[ru]\ar@/_15pt/[rruu]
}} \]

\[\small \begin{array}{ccccccc} I(1)= {\begin{smallmatrix} 2\ \ \,\\ 6\ 6\\4\ 4\\3\ 3\\1 \end{smallmatrix}} & 
I(3)= {\begin{smallmatrix} 2\\6\\4\\3 \end{smallmatrix}}
&
I(5) = 
{\begin{smallmatrix} 4\ \\ \ 3\,4\\\ 5 \end{smallmatrix}}
&
P(5)={\begin{smallmatrix}  5\  \\ 6\,6\ \\ \ 4 \end{smallmatrix}}
&
R={\begin{smallmatrix} 1\quad\\2\,\ 5\\666\\4 \end{smallmatrix}}
&
P(3)= {\begin{smallmatrix}&3&\\1&& 1\quad \\ 2&& 2\,\ 5\\ 6&& 666\\4&& 4 \end{smallmatrix}}
 \end{array}\]

\noindent The 6 modules on the left  form a rightmost local slice 
\[\zS=\{ I(1),  {\begin{smallmatrix} 6\\4\\3 \end{smallmatrix}},I(3),I(5),{\begin{smallmatrix} 4\\3 \end{smallmatrix}},{\begin{smallmatrix} 4 \end{smallmatrix}}\}\] in which both $I(1)$ and $I(5)$ are sources. Their completions are $H_1=\zS$ and $H_5=\{I(5) ,{\begin{smallmatrix} 4\\3 \end{smallmatrix}},{\begin{smallmatrix} 4 \end{smallmatrix}}
\}$.

The module ${\begin{smallmatrix} 6\\4\\3 \end{smallmatrix}}$ satisfies condition (b) of the theorem with respect to $H_1$. Therefore the module $\tau^{-1}{\begin{smallmatrix} 6\\4\\3 \end{smallmatrix}}={\begin{smallmatrix} 2\\6\\4 \end{smallmatrix}}$ does not lie on a local slice.

The module ${\begin{smallmatrix} 4\\3 \end{smallmatrix}}$ does not satisfy condition (b) of the theorem. Indeed, in $H_5$ it does not have an injective successor, and in $H_1$ it is not a successor of $I(1)$. 
 The theorem implies that the module $\tau^{-1}{\begin{smallmatrix} 4\\3 \end{smallmatrix}}={\begin{smallmatrix} 6\\4 \end{smallmatrix}}$ does lie on  a local slice. This local slice is the reflection $\zs_5^+\zS=\{ I(1),  {\begin{smallmatrix} 6\\4\\3 \end{smallmatrix}},I(3),{\begin{smallmatrix} 6\\4 \end{smallmatrix}},{\begin{smallmatrix} 6 \end{smallmatrix}},P(5)\}.$

\end{example}

\bibliographystyle{plain}

\end{document}